\documentclass[12pt]{amsart}
\usepackage{mathrsfs,amsmath,amssymb,amsfonts,amsthm,latexsym,marginnote}
\textwidth = 414pt
\textheight = 630pt
\hoffset = -52pt
\voffset = -35pt
\marginparwidth = 78pt

\theoremstyle{plain}
\newtheorem{theorem}{Theorem}[section]
\newtheorem{proposition}[theorem]{Proposition}
\newtheorem{corollary}[theorem]{Corollary}

\theoremstyle{definition}

\newtheorem{example}[theorem]{Example}

\baselineskip 18pt

\date{\today}
\author[F.~Xanthos]{Foivos Xanthos}
\address{Department of Mathematical and Statistical Sciences, University of Alberta, Edmonton, AB, Canada T6G\,2G1}
\email{foivos@ualberta.ca}
\keywords{Banach lattices, Space of regular operators, copies of $c_0,\ell_\infty$, positive tensor products}
\subjclass[2010]{46B42, 46B28 }

\begin{document}

\title{A version of Kalton's theorem for the space of regular operators}\thanks{This research was supported by NSERC}
\maketitle

\begin{abstract}In this note we extend some recent results in the space of regular operators. In particular, we provide the following Banach lattice version of a classical result of Kalton:
Let $E$ be an atomic Banach lattice with an order continuous norm and $F$ a Banach lattice. Then the following are equivalent:
(i) $L^r(E,F)$ contains no copy of $\ell_\infty$, \,\, (ii) $L^r(E,F)$ contains no copy of $c_0$, \,\, (iii) $K^r(E,F)$ contains no copy of $c_0$, \,\, (iv) $K^r(E,F)$ is a (projection) band in $L^r(E,F)$, \,\, (v) $K^r(E,F)=L^r(E,F)$.
\end{abstract}

\section{Introduction and Notations}
A trademark in the geometry of the space of bounded operators between Banach spaces is the paper \cite{KALT74} of N.J. Kalton. In (\cite{KALT74}, Theorem 6) it is discussed the embeddability of $c_0$ and $\ell_\infty$ into the space of bounded operators and the space of compact operators. In \cite{BU12b}, the authors motivated by the  work of Kalton studied the embeddability of $c_0$ and $\ell_\infty$ in the space of regular operators. In this study it is assumed that the range space is an arbitrary Banach lattice and the domain space is an Orlicz sequence space $\ell_\phi$, where $\phi$ and $\phi^*$ satisfies the $\Delta_2$ condition(in this case, $\ell_\phi$ is reflexive). The purpose of this paper is to extend this study to the case where the domain space is an atomic Banach lattice with an order continuous norm. Our main result also improves some results in \cite{BU11} and \cite{BU12}.

In certain cases, the space of regular operators can be represented as a Banach lattice-valued sequence space. In \cite{BU12b}, the authors derived their results by using techniques from this class of spaces. This approach has been successfully applied by Qingying Bu and coauthors to study several problems in the space of regular operators and positive tensor products of Banach lattices(see \cite{BU12,BU11,BUSKES09,BU09,BU08,BU12b}). In this paper we use a different approach and we work directly on the space of regular operators. In particular by using the concept of cone isomorphism(see \cite{POLY11,SCHE13}) we are able to modify Kalton's main argument, in order  to work for the regular operator norm. Moreover in our approach we use ideas and results from the work of Chen and Wickstead(\cite{Chen06,ChenWick07,ChenWick98,Wick07,Wick96}) in the space of regular operators.

In this paper, Banach spaces are always assumed to be infinite dimensional. Let $E$ and $F$ be Banach lattices, the space of bounded(compact) operators from $E$ to $F$ is denoted by $L(E,F)$($K(E,F)$) and the operator norm is denoted by $||\cdot||$.  The space of regular operators from $E$ into $F$(i.e. the space generated by the positive operators form $E$ into $F$)  is denoted by $L^r(E,F)$ and with $K^r(E,F)$ we denote the space of regular compact operators from $E$ into $F$. In general these spaces are not complete under the operator norm. However, there exist natural norms for which they are complete. In $L^r(E,F)$ this norm is called regular norm, is denoted by $|| \cdot ||_r$ and is defined as follows $||T||_r=\inf\{||S|| \,\, | \,\, S \geq \pm T\}$ for each $T \in L^r(E,F)$. In $K^r(E,F)$ the corresponding norm is called k-norm, is denoted by $|| \cdot ||_k$ and is defined as follows $||T||_k=\inf\{||S|| \,\, | \,\, S \in K^r(E,F), S \geq \pm T\}$ for each $T \in K^r(E,F)$. Apart from the obvious relation $||\cdot||_k\geq ||\cdot||_r \geq ||\cdot||$  there are not many good connections about these norms. In particular, in many cases $K^r(E,F)$ fails to be complete under $||\cdot||_r$. However, it is important for our study that these three norms coincide in the cone of positive operators. At the following when we do not specify a particular norm, we assume that the aforementioned operator spaces are equipped with their natural norm. Finally, we recall here a classical result in this topic: If $F$ is Dedekind complete then $L^r(E,F)$ is a Dedekind complete Banach lattice under the regular norm and $||T||_r=|| \, \, |T| \,\,||$(see \cite{ali}, Theorem 15.2).

\section{The main result}

In \cite{SCHE13}, the author introduced a type of "cone isomorphism", which we will use in the sequel. Let $X$ be a Banach lattice and $Y$ a Banach space, an operator $T: X \rightarrow Y$ is a \textbf{0-cone isomorphism} if there exist constants $A$ and $B$ such that $A||x|| \leq ||Tx|| \leq B||x||$ for each $x \in X_+$. Note that in this case we have that $||Tx|| \leq B(||x^+||+||x^-||) \leq 2B||x||$ for each $x \in X$, hence $T$ is automatically a bounded operator. In \cite{POLY11}, the authors proved that if there exists a stronger type of "cone isomorphism" of $c_0$ into a Banach space $Y$, then $c_0$ is embeddable in $Y$. Below we show that the same holds if we assume the existence of a \textbf{0-cone isomorphism}.

\begin{proposition}\label{c0_weak}
If there exists a 0-cone isomorphism from $c_0$ into a Banach space $Y$, then $c_0$ is embeddable in $Y$.
\end{proposition}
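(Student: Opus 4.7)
The plan is to apply the classical Bessaga--Pe\l czy\'nski selection principle: if $(y_n)$ is a sequence in a Banach space $Y$ with $\inf_n\norm{y_n}>0$ and $\sum_n y_n$ is weakly unconditionally Cauchy (wuC), then some subsequence of $(y_n)$ is equivalent to the canonical basis of $c_0$, and hence $c_0$ embeds into $Y$. All one has to do, then, is produce such a sequence $(y_n)$ out of the given 0-cone isomorphism.

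Let $T\colon c_0\to Y$ be a 0-cone isomorphism with constants $A\le B$, let $(e_n)$ be the standard unit vector basis of $c_0$, and set $y_n=Te_n$. Since each $e_n$ is positive with $\norm{e_n}=1$, the lower cone inequality immediately gives $\norm{y_n}\ge A$, so $\inf_n\norm{y_n}\ge A>0$.

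For the wuC condition, I will use the observation already made in the excerpt that every 0-cone isomorphism is automatically bounded, with $\norm T\le 2B$. In $c_0$, every $\pm 1$ partial sum $\sum_{i=1}^n\epsilon_i e_i$ has norm $1$, so
\[
\Bignorm{\sum_{i=1}^n\epsilon_i y_i}=\Bignorm{T\bigl(\sum_{i=1}^n\epsilon_i e_i\bigr)}\le 2B
\]
for every $n$ and every choice of signs $\epsilon_i\in\{-1,+1\}$, which is precisely wuC for $\sum_n y_n$. Unpacking the automatic boundedness, the mechanism is the familiar one: split the index set according to the sign of $\epsilon_i$, write the sum as $Tu-Tv$ with $u,v\in(c_0)_+$ of norm at most $1$, and apply the upper cone estimate to each of $\norm{Tu},\norm{Tv}$.

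Bessaga--Pe\l czy\'nski then delivers a subsequence $(y_{n_k})$ equivalent to the $c_0$-basis, whose closed linear span is the desired isomorphic copy of $c_0$ in $Y$. I do not anticipate a genuine obstacle here: the lower cone estimate is exactly what is needed for non-degeneracy of $(y_n)$, the upper cone estimate (through automatic boundedness) is exactly what is needed for wuC, and Bessaga--Pe\l czy\'nski supplies the rest. The mild subtlety worth flagging is that the lower inequality $\norm{Tx}\ge A\norm x$ is \emph{not} available on all of $c_0$, so one cannot conclude that $T$ itself is an isomorphism; but this is not needed, because the two ingredients required by Bessaga--Pe\l czy\'nski are already in hand.
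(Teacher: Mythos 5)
Your proposal is correct and follows essentially the same route as the paper: set $y_n=Te_n$, get $\inf_n\norm{y_n}\geq A>0$ from the lower cone estimate, get the uniform bound on signed partial sums from the upper cone estimate via the positive/negative split, and conclude by Bessaga--Pe\l czy\'nski that a subsequence is equivalent to the $c_0$ basis. The paper merely makes explicit the intermediate step that your cited selection principle performs internally (the $y_n$ are weakly null, hence have a basic subsequence, which is then equivalent to the unit vector basis of $c_0$ by the partial-sum bound).
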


\begin{proof}
Suppose that $T:c_0 \rightarrow Y$ is a 0-cone isomorphism and $A,B$ constants such that $A||x|| \leq ||Tx|| \leq B||x||$ for each $x \in c_0^+$. Let $y_n=T(e_n)$ then we have that $\inf{||y_n||}>0$ and $||\sum_{i=1}^n a_i y_i||\leq B \max\{a_i | i=1,..,n\}$ for each $a_i \geq 0$ and $n \in \mathbb{N}$. This implies that $||\sum_{i=1}^n a_i y_i||\leq 2B \max\{|a_i| \,\, | \,\, i=1,..,n\}$ for each $a_i \in \mathbb{R}$ and $n \in \mathbb{N}$(see \cite{POLY11}, p.682). Since $e_n \xrightarrow{{w}} 0$ and $T$ is bounded, we have that $(y_n)$ is weakly null  and  by passing to a subsequence we can assume that $(y_n)$ contains a basic sequence. Therefore $(y_n)$ is equivalent to the standard unit vector basis of $c_0$(see \cite{MEGG98}, Theorem 4.3.7).
\end{proof}

We say that a Banach lattice $X$ is \textbf{positively embedded} in an ordered Banach space $Y$ if there exist a positive isomorphism from $X$ into $Y$.

\begin{proposition}\label{ro_norm}
Suppose that $E,F$  and $X$ are Banach lattices.
\begin{enumerate}
\item[(i)]If $X$ is positively embedded in $(L(E,F),|| \cdot ||)$, then $X$ is positively embedded in $(L^r(E,F),|| \cdot ||_r)$
\item[(ii)]If $X$ is positively embedded in $(K(E,F),|| \cdot ||)$, then $X$ is positively embedded in $(K^r(E,F),|| \cdot ||_k)$
\end{enumerate}
\end{proposition}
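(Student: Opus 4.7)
The plan is to take the positive isomorphism $J$ that witnesses the given embedding and show that, when its codomain is reinterpreted as $L^r(E,F)$ (resp.\ $K^r(E,F)$) equipped with the regular (resp.\ k-) norm, the same map $J$ is still a positive isomorphism onto its image. The crucial ingredient, already mentioned in the introduction, is that the three norms $\|\cdot\|$, $\|\cdot\|_r$, $\|\cdot\|_k$ coincide on the positive cone of operators, together with the ordering $\|\cdot\|_k \geq \|\cdot\|_r \geq \|\cdot\|$.

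For (i), let $J\colon X \to L(E,F)$ be a positive operator with $A\|x\| \leq \|Jx\| \leq B\|x\|$ for all $x \in X$. First, for any $x \in X$ one has $Jx = J(x^+) - J(x^-)$, a difference of positive operators, so $Jx \in L^r(E,F)$; in particular $J$ takes values in $L^r(E,F)$ and is still positive there. For the upper estimate, I would write
\[
\|Jx\|_r \leq \|J(x^+)\|_r + \|J(x^-)\|_r = \|J(x^+)\| + \|J(x^-)\| \leq B\bigl(\|x^+\| + \|x^-\|\bigr) \leq 2B\|x\|,
\]
where the equality uses the coincidence of $\|\cdot\|$ and $\|\cdot\|_r$ on positive operators, and the last inequality uses the Banach lattice property $\|x^{\pm}\| \leq \|x\|$. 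For the lower estimate, the inequality $\|\cdot\|_r \geq \|\cdot\|$ gives immediately
\[
\|Jx\|_r \geq \|Jx\| \geq A\|x\|.
\]
Hence $J\colon X \to (L^r(E,F),\|\cdot\|_r)$ is a positive isomorphism onto its image.

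The proof of (ii) is verbatim the same argument with $\|\cdot\|_r$ replaced by $\|\cdot\|_k$ and $L^r(E,F)$ replaced by $K^r(E,F)$, using that $J(x^{\pm})$ are positive compact operators (hence in $K^r(E,F)$) and that $\|\cdot\|$, $\|\cdot\|_k$ coincide on positive compact operators, while $\|\cdot\|_k \geq \|\cdot\|$ globally. No real obstacle arises; the only subtlety worth flagging is being careful that the lower bound for $\|Jx\|_r$ (resp.\ $\|Jx\|_k$) comes for free from the dominance of these norms over the operator norm, so no control over $|Jx|$ or any lattice structure on $X$ is required.
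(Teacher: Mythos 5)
Your proof is correct and follows essentially the same route as the paper: restrict attention to the range in $L^r(E,F)$ via $Jx=J(x^+)-J(x^-)$, get the lower bound from $\|\cdot\|_r\geq\|\cdot\|$, and get the upper bound from the behaviour of $\|\cdot\|_r$ on positive operators. The only (harmless) difference is in the upper estimate: the paper uses the domination $\pm Jx\leq J(|x|)$ to get the constant $B$ directly, whereas your triangle-inequality argument yields $2B$, which is equally sufficient for an isomorphic embedding.
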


\begin{proof}
$(i)$Suppose that $T: X \rightarrow L(E,F)$ is a positive embedding of $X$ into $L(E,F)$, and $A||x|| \leq ||T(x)|| \leq B||x||$ for each $x \in X$. Let $x \in X$ then $T(x)=T(x^+)-T(x^-)$, thus $Range(T) \subseteq L^r(E,F)$. Moreover, since $\pm T(x) \leq T(|x|)$ we have that $$||T(x)||_r \leq ||T(|x|)|| \leq B||x||.$$ Hence $A||x|| \leq ||T(x)||_r \leq B ||x||$, since $||T(x)|| \leq ||T(x)||_r$ and $X$ is embeddable in $(L^r(E,F),|| \cdot ||_r)$. The above proof works for argument $(ii)$ as well.

\end{proof}

\begin{proposition}\label{ro_norm2}
Suppose that $E,F$  and $X$ are Banach lattices.
\begin{enumerate}
\item[(i)]If $T$ is a positive isomorphism from $X$ into $(L^r(E,F),|| \cdot ||_r)$, then $T$ is a 0-cone isomorphism from $X$ into  $(L(E,F),|| \cdot ||)$.
\item[(ii)]If $T$ is a positive isomorphism from $X$ into $(K^r(E,F),|| \cdot ||_k)$, then $T$ is a 0-cone isomorphism from $X$ into  $(L^r(E,F),|| \cdot ||_r)$.
\end{enumerate}
\end{proposition}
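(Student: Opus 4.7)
The plan is to exploit the key observation emphasized in the introduction: the three norms $\|\cdot\|$, $\|\cdot\|_r$, $\|\cdot\|_k$ all agree on the cone of positive operators. Since $T$ is a positive map, whenever $x \in X_+$ we have $T(x) \geq 0$, so $T(x)$ sits in this cone and the regular (resp.\ $k$-) norm reduces to the operator (resp.\ regular) norm. Starting from the isomorphism inequalities on $X$, this collapse of norms immediately delivers the required two-sided estimates on $X_+$.

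In detail, for (i) I would argue as follows. Since $T$ is an isomorphism from $X$ into $(L^r(E,F),\|\cdot\|_r)$, fix constants $A,B>0$ with $A\|x\| \leq \|T(x)\|_r \leq B\|x\|$ for all $x\in X$. For $x \in X_+$, positivity of $T$ gives $T(x) \geq 0$, and the fact that $\|\cdot\|_r$ and $\|\cdot\|$ coincide on the positive cone yields $\|T(x)\|_r = \|T(x)\|$. Combining these,
\[
A\|x\| \leq \|T(x)\| = \|T(x)\|_r \leq B\|x\| \qquad \text{for every } x \in X_+,
\]
which is precisely the definition of a 0-cone isomorphism from $X$ into $(L(E,F),\|\cdot\|)$.

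Part (ii) follows by exactly the same reasoning, with $\|\cdot\|_k$ replacing $\|\cdot\|_r$ and $\|\cdot\|_r$ replacing $\|\cdot\|$. The only fact being used is that $\|S\|_k = \|S\|_r = \|S\|$ for any positive operator $S$, which is the statement recalled by the author right before Section~2.

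There is essentially no obstacle here: the result is a one-line consequence of the equality of all three natural norms on the positive cone, which is why the author did not need to invoke any heavy machinery such as the band/Dedekind completeness structure of $L^r(E,F)$. The substantive content of the paper is elsewhere; this proposition is a bookkeeping device that lets one pass from a positive embedding measured in the strong norm ($\|\cdot\|_r$ or $\|\cdot\|_k$) to a 0-cone embedding measured in the weaker norm ($\|\cdot\|$ or $\|\cdot\|_r$), thus opening the door to applying Proposition~\ref{c0_weak}.
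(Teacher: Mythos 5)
Your proof is correct and is essentially identical to the paper's: both arguments use the positivity of $T$ to place $T(x)$ in the positive cone for $x\in X_+$, where the norms $\|\cdot\|$, $\|\cdot\|_r$, $\|\cdot\|_k$ coincide, so the isomorphism estimates transfer directly. Nothing further is needed.
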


\begin{proof}
$(i)$Suppose that $T: X \rightarrow L^r(E,F)$ is a positive embedding of $X$ into $L^r(E,F)$, and $A||x|| \leq ||T(x)||_r \leq B||x||$ for each $x \in X$, then for each $x \in X_+$ we have that $A||x|| \leq ||T(x)||\leq B||x||$ and thus $T$ is a 0-cone isomorphism from $X$ into  $(L(E,F),|| \cdot ||)$. The above proof works for argument $(ii)$ as well.

\end{proof}

\begin{proposition}\label{emb}
Let $E$ and $F$ be Banach lattices. Then $E^*$ and $F$ are positively embedded in $(K^r(E,F),||\cdot||_k)$ and $(L^r(E,F),||\cdot||_r)$
\end{proposition}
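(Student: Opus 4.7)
The plan is to use rank-one operators $\phi \otimes y \colon E \to F$ defined by $(\phi \otimes y)(x) = \phi(x)\, y$ for $\phi \in E^*$ and $y \in F$. These operators are automatically compact and regular, and they are positive whenever $\phi \geq 0$ and $y \geq 0$. The strategy is to fix one coordinate at a well-chosen positive element of norm one and let the other coordinate vary, producing positive isometric embeddings of $F$ and $E^*$ into $(K^r(E,F), ||\cdot||_k)$.

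First I would embed $F$. Choose any $\phi_0 \in E^*_+$ with $||\phi_0||=1$; such a functional exists because $||\,|\phi|\,|| = ||\phi||$ for every $\phi \in E^*$, so any norm-one functional yields a norm-one positive functional by taking its modulus. Define $T_1 \colon F \to K^r(E,F)$ by $T_1(y) = \phi_0 \otimes y$. Linearity is clear, and $T_1 y \geq 0$ when $y \geq 0$. The key computation is $||T_1 y||_k = ||y||$: the upper bound comes from observing that $\phi_0 \otimes |y|$ is a positive compact operator dominating $\pm T_1 y$, since for $x \in E_+$ one has $(\phi_0 \otimes |y|)(x) \pm T_1(y)(x) = \phi_0(x)(|y| \pm y) \geq 0$, so $||T_1 y||_k \leq ||\phi_0 \otimes |y|\,|| = ||\phi_0||\cdot ||y|| = ||y||$; the lower bound is simply $||T_1 y||_k \geq ||T_1 y|| = ||\phi_0||\cdot ||y|| = ||y||$.

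Next I would embed $E^*$ by the symmetric construction. Pick $y_0 \in F_+$ with $||y_0||=1$ (just normalize any nonzero positive element) and set $T_2(x^*) = x^* \otimes y_0$. Positivity is again clear, and the same sandwich works with $|x^*| \otimes y_0$ as the dominating positive compact operator, using the Riesz--Kantorovich inequality $|x^*|(x) \geq |x^*(x)|$ for $x \in E_+$ to verify that $\pm T_2 x^* \leq |x^*| \otimes y_0$ in $K^r(E,F)$. This gives $||T_2 x^*||_k = ||x^*||$ by the same two-sided estimate.

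Finally, since $||\cdot||_k \geq ||\cdot||_r \geq ||\cdot||$ on $K^r(E,F) \subseteq L^r(E,F)$, composing $T_1$ and $T_2$ with the inclusion into $L^r(E,F)$ yields the desired positive embeddings into $(L^r(E,F), ||\cdot||_r)$, again with isometric constants. There is essentially no obstacle here: the whole argument is driven by the positivity-dominance $\pm(\phi \otimes y) \leq |\phi| \otimes |y|$, which makes all three natural norms agree on these rank-one ranges, and the only minor step requiring a word of justification is the existence of a norm-one positive functional in $E^*$.
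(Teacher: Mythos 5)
Your proof is correct and follows essentially the same route as the paper: the same rank-one operators $x^*\otimes y_0$ and $\phi_0\otimes y$, with the same domination $\pm(\phi\otimes y)\leq |\phi|\otimes|y|$ driving the norm estimates. The only difference is organizational — the paper first proves the isometric positive embedding into $(K(E,F),\|\cdot\|)$ and then invokes Proposition \ref{ro_norm} to pass to the $k$-norm and regular norm, whereas you carry out that (identical) domination argument inline.
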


\begin{proof}
We will prove that $E^*$ and $F$ are positively embedded in $K(E,F)$. Then the conclusion follows from Proposition \ref{ro_norm}. Let $y_0 \in F_+$ with $||y_0||=1$, we define the following operator $\Phi: E^* \rightarrow K(E,F)$, $\Phi(x^*)=x^* \otimes y_0$, clearly $\Phi$ is positive and it is an isometry of $E^*$ onto a subspace of $K(E,F)$. Indeed, $||\Phi(x^*)||=\sup\{||x^*(x)y_0|| \,\, | \,\, x \in B_E\}=||y_0||\sup\{|x^*(x)| \,\, | \,\, x \in B_E\}=||x^*||$, thus $E^*$ is positively embedded in $K(E,F)$. The proof for $F$ is analogous. Let $x^*_0 \in E^*_+$ with $||x^*_0||=1$ and define the following operator $\Psi: F \rightarrow K(E,F)$, $\Psi(y)=x_0^* \otimes y$, clearly $\Psi$ is positive and it is an isometry of $F$ onto a subspace of $K(E,F)$. Indeed, $||\Psi(y)||=\sup\{||x^*_0(x)y|| \,\, | x \in B_E\}=||y||\sup\{|x^*_0(x)| \,\, | x \in B_E\}=||y||$, thus $F$ is positively embedded in $K(E,F)$.
\end{proof}

\begin{proposition}\label{wick1}Let $E$ and $F$ be Banach lattices. If $c_0$ is not positively embedded in $(K^r(E,F), || \cdot ||_k)$ then  $(K^r(E,F), || \cdot ||_k)$ is a KB-space and $E^*,F$ are KB-spaces.
\end{proposition}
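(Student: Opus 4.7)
The plan is to leverage Proposition \ref{emb}, which positively embeds both $E^*$ and $F$ into $(K^r(E,F),||\cdot||_k)$, together with a Bessaga--Pelczynski style argument applied directly to the positive cone of $K^r(E,F)$.

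First I would handle $E^*$ and $F$. Suppose $c_0$ embeds (as a Banach space) into $E^*$. By the Meyer--Nieberg characterization of KB-spaces, any copy of $c_0$ in a Banach lattice can be refined to a positive (equivalently, sublattice) copy of $c_0$, so $c_0$ is positively embedded in $E^*$. Composing this positive embedding with the positive embedding $E^* \hookrightarrow (K^r(E,F),||\cdot||_k)$ furnished by Proposition \ref{emb} yields a positive embedding of $c_0$ into $K^r(E,F)$, contradicting the hypothesis. Thus $E^*$ contains no copy of $c_0$ and is a KB-space; the identical argument applied through the positive embedding of $F$ from Proposition \ref{emb} gives that $F$ is a KB-space.

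Next I would show that $(K^r(E,F),||\cdot||_k)$ is itself a KB-space. Suppose otherwise: there exists a norm-bounded increasing sequence $(T_n) \subset K^r(E,F)_+$ which fails to be Cauchy. Passing to a subsequence, one may assume $||T_{n+1}-T_n||_k \geq \varepsilon > 0$; setting $S_n := T_{n+1}-T_n \geq 0$, the partial sums $\sum_{i\leq N} S_i = T_{N+1}-T_1$ remain uniformly $||\cdot||_k$-bounded while $||S_n||_k \geq \varepsilon$ for all $n$. Because the positive cone of $(K^r(E,F),||\cdot||_k)$ is closed and normal (indeed $0 \leq R \leq S$ with $S \in K^r(E,F)$ gives $||R||_k \leq ||S||_k$, since $||\cdot||_k$ coincides with the operator norm on positive operators), the norm-boundedness of the monotone partial sums forces $\sum S_n$ to be a weakly unconditionally Cauchy series. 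Applying the Bessaga--Pelczynski theorem yields a subsequence $(S_{n_k})$ equivalent to the unit-vector basis of $c_0$, and since each $S_{n_k}$ is positive this equivalence is automatically a positive embedding of $c_0$ into $(K^r(E,F),||\cdot||_k)$, contradicting the hypothesis.

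The principal obstacle lies in the second step: $(K^r(E,F),||\cdot||_k)$ need not be a vector lattice, so Meyer--Nieberg cannot be quoted directly, and the usual KB argument has to be carried through at the level of an ordered Banach space. This reduces to verifying cone normality under $||\cdot||_k$ (so that bounded monotone partial sums give a weakly unconditionally Cauchy series) and observing that the resulting copy of $c_0$ is a genuine \emph{positive} embedding, both of which follow from the positivity of the $S_{n_k}$ and elementary properties of the $k$-norm on positive operators.
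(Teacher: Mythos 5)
Your first step (that $E^*$ and $F$ are KB-spaces) is correct and is essentially the paper's argument: a Banach lattice that is not a KB-space contains a positive disjoint copy of $c_0$, which the positive embeddings of Proposition \ref{emb} transport to a positive copy of $c_0$ in $(K^r(E,F),\|\cdot\|_k)$, contradicting the hypothesis.

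The gap is in the second step. What you actually prove there is that every increasing, $\|\cdot\|_k$-bounded sequence in $K_+(E,F)$ is norm convergent, and that argument (monotonicity of $\|\cdot\|_k$ on positive operators, hence a weakly unconditionally Cauchy series of the differences, Bessaga--Pelczynski selection, positivity of the resulting embedding) is sound as far as it goes. But the proposition asserts that $(K^r(E,F),\|\cdot\|_k)$ \emph{is a KB-space}, which presupposes that it is a Banach lattice; you explicitly concede that you have not shown $K^r(E,F)$ to be a vector lattice and then treat the ordered-Banach-space version of the monotone property as if it were the full conclusion. It is not: the lattice structure is an essential part of the assertion, and it is exactly what the paper uses downstream (forming $|T|$ and $T^+$ inside $K^r(E,F)$, the identity $\|\cdot\|_k=\|\cdot\|_r$ on $K^r(E,F)$, and the band argument in the main theorem). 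The paper closes this gap by exploiting the first step: since $E^*$ and $F$ are KB-spaces they have order continuous norms (so $F$ is Dedekind complete and $L^r(E,F)$ is a lattice), and the Dodds--Fremlin domination theorem applies; writing $T=T_1-T_2$ with $T_1,T_2$ positive compact gives $0\le T^+\le T_1$, hence $T^+$ compact, so $K^r(E,F)$ is a sublattice of $L^r(E,F)$, and $\|T\|_r=\|\,|T|\,\|\ge\|T\|_k\ge\|T\|_r$ shows the two norms coincide there. Only then does ``no positive copy of $c_0$'' yield ``KB-space'' by the standard Banach-lattice characterization. You need to insert this Dodds--Fremlin step -- which is where the KB-ness of $E^*$ and $F$ earns its keep -- either before or in place of your direct Bessaga--Pelczynski argument.
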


\begin{proof}
We will prove first, that $E^*$ and $F$ are KB-spaces. Suppose that $F$ is not a KB-space, then $F$ contains a positive basic sequence $y_n$ equivalent to the standard unit vector basis of $c_0$. Thus by Proposition \ref{emb} we have that $c_0$ is positively embedded in $K^r(E,F)$ a contradiction. Similarly we can prove that $E^*$ is a KB-space.

Next we will prove that  $K^r(E,F)$ is a sublattice of $L^r(E,F)$. Let $T \in K^r(E,F)$ and suppose that $T=T_1-T_2$, where $T_1,T_2 \in K_+(E,F)$ then $T_1 \geq T,0$ hence $0 \leq T^+ \leq T_1$ and by the Dodds-Fremlin domination theorem(see \cite{ali}, Theorem 16.20) we have that $T^+ \in K^r(E,F)$. Next we will verify that $|| \cdot ||_k$ and $|| \cdot ||_r$ coincide in $K^r(E,F)$. Let $T \in K^r(E,F)$ then by definition we have that $||T||_k \geq ||T||_r$. Moreover $|T| \in K^r(E,F)$, so $||T||_r=||\, |T| \, ||  \geq ||T||_k$. Hence $K^r(E,F)$ is a Banach lattice under $|| \cdot ||_k$ and since $c_0$ is not positively embedded, we have that $c_0$ is not lattice embeddable and $(K^r(E,F), || \cdot ||_k)$ is a KB-space.

\end{proof}

\begin{corollary}\label{c0_inL(E,F)}
Let $E$ and $F$ be Banach lattices. If $c_0$ is embeddable in $(K^r(E,F), || \cdot ||_k)$, then $c_0$ is embeddable in $(L^r(E,F), || \cdot ||_r)$
\end{corollary}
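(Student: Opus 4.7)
The plan is to promote the given (unsigned) embedding of $c_0$ into $K^r(E,F)$ to a \emph{positive} embedding, after which the rest is a direct chain of previously established results. The whole argument rests on the dichotomy encoded in Proposition \ref{wick1}: either $c_0$ is positively embeddable in $(K^r(E,F),\|\cdot\|_k)$, or else $(K^r(E,F),\|\cdot\|_k)$ is a KB-space.

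First I would suppose, for contradiction, that $c_0$ is not positively embedded in $(K^r(E,F),\|\cdot\|_k)$. By Proposition \ref{wick1} this forces $(K^r(E,F),\|\cdot\|_k)$ to be a KB-space. Every KB-space is weakly sequentially complete, whereas the summing sequence $\sum_{i\le n}e_i$ in $c_0$ is weak Cauchy but not weakly convergent; hence no KB-space can contain a copy of $c_0$. This contradicts the hypothesis that $c_0$ embeds in $(K^r(E,F),\|\cdot\|_k)$, so in fact $c_0$ \emph{must} be positively embedded in $(K^r(E,F),\|\cdot\|_k)$.

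Now let $T:c_0\to (K^r(E,F),\|\cdot\|_k)$ be such a positive isomorphism. By Proposition \ref{ro_norm2}(ii), $T$ is automatically a 0-cone isomorphism from $c_0$ into $(L^r(E,F),\|\cdot\|_r)$. Applying Proposition \ref{c0_weak} to $T$ viewed this way yields a copy of $c_0$ inside $(L^r(E,F),\|\cdot\|_r)$, which is the conclusion.

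The only non-routine point is the passage from an arbitrary embedding of $c_0$ to a positive embedding, and this is precisely what the KB-space/weak sequential completeness observation handles; once this step is in place, the remaining reductions are mechanical applications of the preceding propositions.
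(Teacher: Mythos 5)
Your proof is correct and follows essentially the same route as the paper's: reduce to the case of a positive embedding via Proposition \ref{wick1} (the paper leaves the KB-space/weak sequential completeness justification implicit, which you rightly spell out), then apply Proposition \ref{ro_norm2}(ii) and Proposition \ref{c0_weak}.
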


\begin{proof}
In view of Proposition \ref{wick1}, we can assume that $c_0$ is positive embeddable in $K^r(E,F)$, then there exists a positive isomorphism from $c_0$ into $K^r(E,F)$, thus by  Proposition \ref{ro_norm2} and Proposition \ref{c0_weak} we have that $c_0$ is embeddable in $L^r(E,F)$.

\end{proof}

We say that a Banach lattice $E$ has the \textbf{positive dual Schur property(PDSP)} if every positive $w^*$-null sequence converges in norm to zero. This property was introduced in \cite{AQZZ11} and further developed in \cite{WNUK13}. We remark here that every AM-space with a strong unit has the (PDSP) and every space with an order continuous norm fails the (PDSP)(see in \cite{WNUK13}).

\begin{theorem}\label{PDSP_prop}
Let $E$ and $F$ be Banach lattices, such that $E$ fails the (PDSP), then
\begin{itemize}
\item[(i)] $c_0$ is embeddable in $(L^r(E,F),||\cdot||_r)$ if and only if $\ell_\infty$ is embeddable in $(L^r(E,F),||\cdot||_r)$,
\item[(ii)] If the supremum of every increasing sequence of positive compact operators is a compact operator, then $E^*$ and $F$ are KB-spaces.
\end{itemize}
\end{theorem}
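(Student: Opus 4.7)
The plan starts by using $E$'s failure of (PDSP) to fix once and for all a positive weak${}^*$-null sequence $(x_n^*) \subseteq E^*_+$ with $||x_n^*|| = 1$ for every $n$; this is the single consequence of (PDSP) failure to be invoked throughout. The direction $\ell_\infty \hookrightarrow L^r(E,F) \Rightarrow c_0 \hookrightarrow L^r(E,F)$ in (i) is immediate from the inclusion $c_0 \subseteq \ell_\infty$, so I concentrate on the converse and on part (ii).

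For the forward direction of (i), starting from a copy of $c_0$ in $(L^r(E,F), || \cdot ||_r)$, I would first extract a disjoint positive sequence $(T_n) \subseteq L^r(E,F)$ that is equivalent to the $c_0$-basis in the regular norm, working inside the Banach lattice $L^r(E,F^\delta)$ (with $F^\delta$ the Dedekind completion of $F$) and invoking Meyer-Nieberg's theorem. For each $a \in \ell_\infty^+$ the operator $S_a := \sup_N \sum_{n=1}^N a_n T_n$ is defined as an order limit in a sufficiently complete ambient lattice; extending by $S_a := S_{a^+} - S_{a^-}$ for general $a \in \ell_\infty$, the disjointness of $(T_n)$ forces $S_{a^+}$ and $S_{a^-}$ to be lattice-disjoint, whence $|S_a| = S_{|a|}$ and $||S_a||_r \asymp ||a||_\infty$. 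This yields a positive isomorphic embedding of $\ell_\infty$ into $(L^r(E,F), || \cdot ||_r)$.

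For (ii), I would argue both KB conclusions by contradiction. Suppose first that $F$ is not KB; then $F$ contains a positive disjoint sequence $(y_n)$ with $||y_n||=1$ and uniformly bounded partial sums, together with biorthogonal functionals $y_n^* \in F^*$ satisfying $y_n^*(y_m) = \delta_{nm}$ and $\sup_n ||y_n^*|| < \infty$. Form the positive rank-one compact operators $T_n = x_n^* \otimes y_n \in K^r(E,F)$; their partial sums $U_N = \sum_{n=1}^N T_n$ are increasing, positive, compact and uniformly norm-bounded. The hypothesis of (ii) produces a compact $U = \sup_N U_N \in L^r(E,F)$ with $U(x) = \sum_n x_n^*(x) y_n$ (the series converges in $F$-norm since $x_n^*(x) \to 0$ and $(y_n)$ is $c_0$-basic), and hence $U^*(y_n^*) = x_n^*$ where $U^*: F^* \to E^*$ is compact. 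Compactness applied to the bounded sequence $(y_n^*)$ forces $(x_n^*)$ to have a norm-convergent subsequence, contradicting the weak${}^*$-null normalized property of $(x_n^*)$. For $E^*$, if $E^*$ fails KB then it contains a positive disjoint $c_0$-basic sequence $(z_n^*)$, which is automatically weak${}^*$-null since its partial sums are increasing and bounded in the Dedekind complete $E^*$; an analogous compact-operator construction, now exploiting the KB-ness of $F$ already established together with a duality argument in $K^r(F^*,E^*)$, delivers the corresponding contradiction with the normalized disjointness of $(z_n^*)$.

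The principal technical obstacle in (i) is guaranteeing that the supremum defining $S_a$ actually lives inside $L^r(E,F)$ itself: the sequence of partial sums $\sum_{n=1}^N T_n$ is norm-bounded but not a priori order-bounded (indeed $L^r(E,F)$ is not KB once it contains $c_0$), so a passage through $F^\delta$ and a transfer-back argument using the (PDSP) failure is required. The main obstacle in (ii) is the $E^*$ case: a direct rank-one construction based on $(z_n^*) \otimes y_0$ yields an automatically compact rank-one supremum and therefore no immediate contradiction with (SC), so the argument has to proceed by a more delicate operator-theoretic construction, probably through the adjoint identification $L^r(E,F) \to L^r(F^*,E^*)$ combined with the established KB-ness of $F$, so that (SC) can be brought to bear on a non-rank-one situation in the dual setting.
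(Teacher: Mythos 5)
Your treatment of the ``$F$ is a KB-space'' half of (ii) is essentially the paper's own argument (rank-one operators $x_n^*\otimes y_n$, partial sums increasing to a non-compact $S$ detected through $S^*(y_m^*)=x_m^*$ and the normalized $w^*$-null sequence), and that part is fine. The other two pieces have genuine gaps.

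In (i), the whole construction of $S_a=\sup_N\sum_{n\le N}a_nT_n$ presupposes that these suprema exist, and you correctly identify this as the obstacle but never overcome it. Dedekind completeness of $F^\delta$ does not help: for $x\ge 0$ the partial sums $\sum_{n\le N}a_nT_nx$ form an increasing \emph{norm-bounded} but not order-bounded sequence in $F$, and such sequences have suprema only when $F$ is monotonically complete, i.e.\ essentially a KB-space. So you must first prove that $F$ is a KB-space, and this is precisely where the failure of (PDSP) enters and where your sketch is silent (``a transfer-back argument \dots is required'' is not an argument). The paper does it by contradiction: if $F$ contained a positive $c_0$-basic sequence $(y_n)$, then with a normalized positive $w^*$-null $(x_n^*)$ the map $\xi\mapsto\sum_n\xi_nx_n^*\otimes y_n$ embeds $\ell_\infty$ into $L(E,F)$, hence positively into $(L^r(E,F),\|\cdot\|_r)$, contradicting the standing assumption that $\ell_\infty$ does not embed. (The same issue infects your extraction of a disjoint positive $c_0$-sequence, which needs $L^r(E,F)$ to be a Banach lattice, again requiring $F$ Dedekind complete first; and an $\ell_\infty$-copy built in $L^r(E,F^\delta)$ does not automatically sit in $L^r(E,F)$.) Once $F$ is known to be KB your supremum route can be made to work, but the paper's finish is cleaner and worth knowing: the series $\sum\xi_nS_nx$ are WUC, hence unconditionally convergent in the KB-space $F$, giving a positive (hence bounded) $\Psi:\ell_\infty\to L^r(E,F)$; if $\ell_\infty$ did not embed, $\Psi$ would be weakly compact by Rosenthal's theorem and then Dunford--Pettis, forcing $\|S_n\|=\|\Psi(e_n)\|\to 0$, a contradiction.

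For the $E^*$ half of (ii) you concede you do not have an argument, and the route you gesture at cannot work as stated: the hypothesis concerns increasing sequences of positive compact operators from $E$ to $F$ and their suprema in $L^r(E,F)$, so it cannot be ``brought to bear'' on a construction living in $K^r(F^*,E^*)$. The missing idea is to move the $c_0$-structure of $E^*$ back into $E$: if $E^*$ is not KB then $E^*$ does not have order continuous norm, so $\ell_1$ embeds in $E$ as the range of a positive projection $P$, with basis $(z_n)$ and coordinate functionals $(z_n^*)$. Taking any bounded positive sequence $(y_n)$ in $F$ with no convergent subsequence (which exists since $F$ is infinite-dimensional), the operators $T_n=\sum_{i\le n}(z_i^*\circ P)\otimes y_i$ are positive, compact, increasing, and converge pointwise (absolutely, by the $\ell_1$-estimate $\sum_i|z_i^*(Px)|<\infty$) to an operator $S$ with $Sz_n=y_n$; thus $S=\sup_nT_n$ exists but is not compact, contradicting the hypothesis. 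Note this argument uses neither (PDSP) nor the KB-ness of $F$.
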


\begin{proof}
(i)Suppose that $c_0$ is embeddable in $L^r(E,F)$ and $\ell_\infty$ is not  embeddable in $L^r(E,F)$. We will prove first that $F$ is a KB-space. Suppose that $c_0$ is lattice embeddable in $F$ and $(y_n) \subseteq F_+$ is equivalent to the standard unit vector basis of $c_0$. Let $(x^*_n) \subseteq E^*_+$ such that $||x^*_n||=1$ and $x^*_n \xrightarrow{{w^*}} 0$. We define the following map, $$\Phi: \ell_\infty \rightarrow L(E,F), \Phi(\xi)(x)=\sum_{n=1}^\infty \xi_n x_n^*(x)y_n \,\, \text{for each} \,\, x \in E.$$ Fix some $\xi \in \ell_\infty^+$, then clearly $\Phi(\xi) \in L_+(E,F)$ and is not hard to check that  $\Phi$ is an isomorphism from $\ell_\infty$ into $L(E,F)$. Thus by Proposition \ref{ro_norm} we have that $\ell_\infty$ is embeddable in $(L^r(E,F),|| \cdot|| _r)$, a contradiction.

Since $F$ is a KB-space, $L^r(E,F)$ is a Banach lattice and $c_0$ is lattice embeddable in $L^r(E,F)$. Hence, there exists a sequence $(S_n) \subseteq L_+(E,F)$ equivalent to the standard basis of $c_0$.  For each $\xi=(\xi_n) \in \ell_\infty$ and $x \in E$ we have that $\sum_{n=1}^\infty \xi_nS_nx$ is weakly unconditionally Cauchy in $F$. Indeed, for each $x \in B_E$, each finite  $J \subseteq \mathbb{N}$ and $\epsilon_n=\pm 1$ we have that $||\sum_{i \in J} \epsilon_i \xi_i S_ix|| \leq || \sum_{i \in J} \epsilon_i \xi_i S_i|| \leq || \sum_{i \in J} \epsilon_i \xi_i S_i||_r \leq ||\xi|| $, up to a constant. The conclusion follows from (\cite{kalton}, Lemma 2.4.6).  Since $F$ is a KB-space we have that $\sum_{n=1}^\infty \xi_nS_nx$ converges unconditionally in $F$(see \cite{kalton}, Theorem 2.4.11). Thus, the operator $\Psi: \ell_\infty \rightarrow L^r(E,F)$ given by $\Psi(\xi)=\sum_{n=1}^\infty \xi_n S_n$ in the strong operator topology is well defined. $\Psi$ is a bounded operator, since it is positive. In particular,  since $\ell_\infty$ is not embeddable in $(L^r(E,F),||\cdot||_r)$, it follows by (\cite{diestel}, Theorem 10, p. 156) that $\Psi$ is weakly compact and by (\cite{ali}, Theorem 19.6), we have that $\Psi$ is a Dunford Pettis operator. Thus $\lim_{n \rightarrow \infty}|| \Psi(e_n)||=\lim_{n \rightarrow \infty}|| S_n||=0$, which contradicts the fact that $S_n$ is equivalent to the standard unit vector basis of $c_0$.

(ii)Suppose that $E^*$ does not have order continuous norm, then $\ell_1$ is embeddable in $E$, as the range of a positive projection $P$(see \cite{mn1991}, Proposition 2.3.11 and Theorem 2.4.14). We denote with $(z_n) \subseteq E$ the corresponding sequence that is equivalent with the standard unit vector basis of $\ell_1$ and $(z^*_n)$ the biorthogonal functionals of $(z_n)$.  Let $y_n$ a positive bounded sequence in $F$ with no convergent subsequence, then we define the following operators $$T_n(x)=\sum_{i=1}^n z_i^*(P(x)) y_i, S(x)=\lim T_n(x) \,\, \text{for each } \ x \in E.  $$

The above limit exists, since $\sum_{i=1}^\infty |z_i^*(P(x))|<\infty$ and $(y_n)$ is bounded. $(T_n)$ is a sequence of positive compact operators such that $T_n \uparrow S$.  The operator $S$  is not compact since  $S(z_n)=y_n$ which has no convergent subsequence, a contradiction.

Suppose that $F$ is not a KB-space, then there exists a sequence $(y_n)$  in $F_+$ equivalent to the standard unit vector basis of $c_0$.  Let $z_n$ a normalized,  $w^*$-null sequence in $E^*_+$. We define, the following operators
$$T_n(x)=\sum_{i=1}^n z_i(x)y_i, S(x)=\lim T_n(x) \,\, \text{for each } \ x \in E.$$

The above limit exists since $(z_i(x))_i \in c_0$ and $y_n$ is equivalent to the standard unit vector basis of $c_0$. Then $(T_n)$ is a sequence of positive compact operators such that $T_n \uparrow S$.
At the following we will prove that $S$ is not compact, by proving that $S^*$ is not compact. Let $(y_n^*) \subseteq F^*$ the biorthogonal functionals of $(y_n)$, then $(y^*_n)$ is a bounded sequence and we have that $S^*(y_m^*)(x)=y_m^*(S(x))=\sum_{n=1}^\infty z_n(x)y_m^*(y_n)=z_m(x)$ for each $x \in E$. Hence $S^*(y^*_m)=z_m$, which does not have a norm convergent subsequence, a contradiction.
\end{proof}

The following example illustrates that in the previous Theorem the assumption that $E$ does not have the (PDSP) cannot be dropped. It is interesting to observe that in the setting of the space of bounded operators of Banach spaces, the situation is different. In particular, the corresponding argument $(i)$ of Theorem \ref{PDSP_prop} is valid for any Banach space $E$ and $F$. That is,  $c_0$ is embeddable in $L(E,F)$ iff $\ell_\infty$ is embeddable in $L(E,F)$(see \cite{FEDER80}, p. 201).

\begin{example}\label{exmp1}
Let $E$ be an AM-space with a strong unit and $F$ an order continuous Banach lattice, which is not a KB-space. According to Proposition \ref{emb} we have that $c_0$ is embeddable in $L^r(E,F)$. By (\cite{Chen06}, Proposition 3.1)  we have that $L^r(E,F)$ has an order continuous norm, thus $\ell_\infty$ is not embeddable in $L^r(E,F)$. Moreover $K^r(E,F)$ is a band in $L^r(E,F)$ according to (\cite{ChenWick07}, Theorem 3.7).
\end{example}

Before we proceed to our main result, we remark here that if $E$ is an atomic Banach lattice with an order continuous norm and $F$ is any Banach lattice, then $L^r(E,F)$ is always a vector lattice(see \cite{Wick07}, Theorem 3.4). However, the space of regular compact operators may fail to be a vector lattice and moreover  $|| \cdot ||_r$ and $|| \cdot ||_k$ may not be equivalent in the space of regular compact operators(see \cite{ChenWick98}, Corollary 5.4). The following extends part (b) of (\cite{BU12b}, Theorem 9) and the KB-part of (\cite{BU11}, Theorem 8)

\begin{theorem}\label{main_thm}Let $E$ be an atomic Banach lattice with an order continuous norm. If $F$ is any Banach lattice, the following are equivalent.

\begin{enumerate}
\item[(i)]$(L^r(E,F),||\cdot||_r)$ contains no copy of $\ell_\infty$
\item[(ii)]$(L^r(E,F),||\cdot||_r)$ contains no copy of $c_0$
\item[(iii)]$(K^r(E,F),||\cdot||_k)$ contains no copy of $c_0$
\item[(iv)]$K^r(E,F)$ is a (projection) band in $L^r(E,F)$
\item[(v)]$K^r(E,F)=L^r(E,F)$
\end{enumerate}

\end{theorem}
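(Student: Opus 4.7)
The plan is to prove the cycle $(i)\Rightarrow(ii)\Rightarrow(iii)\Rightarrow(iv)\Rightarrow(v)\Rightarrow(i)$. The first implication $(i)\Rightarrow(ii)$ is immediate from Theorem \ref{PDSP_prop}(i), since the order continuous norm of $E$ forces $E$ to fail the (PDSP); $(ii)\Rightarrow(iii)$ is the contrapositive of Corollary \ref{c0_inL(E,F)}. For $(iii)\Rightarrow(iv)$, I would invoke Proposition \ref{wick1} to conclude that $E^*$ and $F$ are KB-spaces (hence order continuous) and that $(K^r,\|\cdot\|_k)$ is itself a KB-space with $\|\cdot\|_k=\|\cdot\|_r$ on $K^r$. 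The Dodds-Fremlin theorem (\cite{ali}, Theorem 16.20) then shows that $K^r$ is an ideal in $L^r$, and the order continuity of this ideal forces it to be order-closed, so a band; since $F$ is KB and therefore Dedekind complete, $L^r$ is Dedekind complete and every band in it is a projection band.

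For $(iv)\Rightarrow(v)$, the key observation is that the disjoint complement $(K^r)^d$ in $L^r$ is trivial under the atomic hypothesis on $E$. By \cite{Wick07}, Theorem 3.4, $L^r(E,F)$ is a vector lattice whose lattice operations act coordinatewise on the atoms of $E$. If $T\in L^r$ has $|T|\neq 0$, choose an atom $e_k$ with $|T|(e_k)=y\neq 0$; the rank-one positive operator $S=e_k^*\otimes y\in K^r_+$ then satisfies $(|T|\wedge S)(e_l)=\delta_{kl}y=S(e_l)$ on every atom $e_l$, so $|T|\wedge S=S\neq 0$. Hence $|T|\notin(K^r)^d$, $(K^r)^d=\{0\}$, and the projection-band decomposition $L^r=K^r\oplus(K^r)^d$ supplied by $(iv)$ collapses to $L^r=K^r$.

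The substantive step is $(v)\Rightarrow(i)$. Under $(v)$, every increasing sequence of positive compact operators has a compact supremum, so Theorem \ref{PDSP_prop}(ii) yields that $E^*$ and $F$ are KB-spaces. Assume for contradiction that $\ell_\infty\hookrightarrow L^r$; then $c_0\hookrightarrow L^r=K^r$, and since $L^r$ is a Dedekind complete Banach lattice a standard lattice embedding result yields a disjoint positive sequence $(T_n)\subseteq K^r_+$ equivalent to the $c_0$-basis. Its supremum $T^*=\sup_n T_n$ exists in $L^r$ and is compact by $(v)$. Let $P_m$ denote the band projection of $E$ onto the span of the first $m$ atoms. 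From $0\leq T_n(I-P_m)\leq T^*(I-P_m)$ and the lattice norm we obtain $\|T_n-T_n P_m\|\leq\|T^*-T^* P_m\|$, whose right-hand side tends to $0$ as $m\to\infty$ (a standard consequence of compactness of $T^*$ and the strong convergence $P_m\to I$), uniformly in $n$. For fixed $m$, Wickstead's atomic formulas make $(T_n e_k)_n$ a disjoint sequence in the order interval $[0,T^*e_k]\subseteq F$, so the order continuity of $F$ forces $\|T_n e_k\|\to 0$ as $n\to\infty$; consequently $\|T_n P_m\|\leq\sum_{k=1}^m\|e_k^*\|\,\|T_n e_k\|\to 0$ as $n\to\infty$. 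Choosing first $m$ then $n$ gives $\|T_n\|_r=\|T_n\|\to 0$, contradicting the $c_0$-basis equivalence $\inf_n\|T_n\|_r>0$. The main technical obstacle is precisely this two-scale estimate: compactness of $T^*$ handles the tail of atoms uniformly in $n$, while order continuity of $F$ handles the head for $n$ large.
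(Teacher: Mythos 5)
Your cycle agrees with the paper's for (i)$\Leftrightarrow$(ii) (via Theorem \ref{PDSP_prop}(i)) and (ii)$\Rightarrow$(iii) (via Corollary \ref{c0_inL(E,F)}), but you take genuinely different routes for the last two implications. For (iv)$\Rightarrow$(v) the paper shows directly that every $T\geq 0$ is the supremum of the increasing net $TP_\delta$ of positive finite-rank (hence compact) operators, so that order-closedness of the ideal $K^r$ already forces $T\in K^r$; you instead show $(K^r)^d=\{0\}$ via rank-one operators supported on single atoms. Your computation of $|T|\wedge S$ on atoms is correct, but note that your argument uses the \emph{projection} band decomposition $L^r=K^r\oplus (K^r)^d$: a band with trivial disjoint complement need not be the whole space in a general vector lattice (e.g. $\{f\in C[0,1]: f(1/2)=0\}$), and Dedekind completeness of $L^r$ is not available at that point of the cycle. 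So your proof closes the cycle only for the ``projection band'' reading of (iv), whereas the paper's order-closedness argument handles the weaker ``band'' reading as well. Also, in (iii)$\Rightarrow$(iv) your justification ``the order continuity of this ideal forces it to be order-closed'' is false as a general principle ($c_0$ is a closed ideal with order continuous norm in $\ell_\infty$ but is not a band); the correct mechanism, which the paper uses and which you have available from Proposition \ref{wick1}, is the KB-property of $(K^r,||\cdot||_k)$: an increasing net $T_a\uparrow S$ is $||\cdot||_r$-bounded, hence $||\cdot||_k$-convergent to some compact $T$, and this $T$ must equal $S$.

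The substantive divergence is (v)$\Rightarrow$(i), where the paper simply quotes Chen--Wickstead (\cite{ChenWick07}, Theorem 2.8) for the order continuity of $||\cdot||_k$ on $K^r(E,F)$, while you reprove the needed fact by hand with a two-scale estimate. The structure of your estimate is sound, and the ``head'' part (disjointness of $(T_ne_k)_n$ in $[0,T^*e_k]$ plus order continuity of $F$) is fine. The genuine gap is the ``tail'' part: $||T^*(I-P_m)||\to 0$ is \emph{not} ``a standard consequence of compactness of $T^*$ and the strong convergence $P_m\to I$''. The standard fact gives norm convergence of $(I-P_m)K$ for compact $K$, i.e. composition with the strongly null sequence on the \emph{left}; composing on the right fails in general. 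Concretely, take $E=\ell_1$, $T^*=\mathbf{1}\otimes y$ with $\mathbf{1}=(1,1,\dots)\in\ell_\infty$ and $0\neq y\in F_+$: then $T^*$ is rank one and positive, yet $||T^*(I-P_m)||=||y||$ for every $m$. What saves your argument is a hypothesis you have already established but do not invoke here: $E^*$ is a KB-space, hence has order continuous norm, hence (being atomic, with atoms the biorthogonal functionals $e_k^*$) the adjoints $P_m^*$ converge strongly to the identity on $E^*$; then $||T^*(I-P_m)||=||(I-P_m^*)(T^*)^*||\to 0$ by Schauder's theorem applied to the compact adjoint. With that repair the contradiction $||T_n||\to 0$ goes through. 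Finally, a small point at the start of that paragraph: the existence of $T^*=\sup_n T_n$ does not follow from Dedekind completeness of $L^r$ alone (norm-bounded increasing sequences need not be order bounded); you should either use that the lattice copy of $\ell_\infty$ supplies the upper bound $J(\mathbf{1})$, or construct $T^*$ as a strong limit using the KB-property of $F$.
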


\begin{proof}
Since $E$ has an order continuous norm, $E$ fails the positive dual Schur property. Hence by Theorem \ref{PDSP_prop} we have that (i) $\Leftrightarrow$ (ii).

(ii) $\Rightarrow$ (iii) It follows by Corollary \ref{c0_inL(E,F)}.

(iii) $\Rightarrow$ (iv)By Proposition \ref{wick1}, $K^r(E,F)$ is a KB-space and $E^*,F$ are KB-spaces. By the Dodds Fremlin domination theorem, it follows that $K^r(E,F)$ is an ideal in $L^r(E,F)$. Let $(T_a) \subseteq K_+(E,F)$ such that $T_a \uparrow S$ in $L^r(E,F)$, then $||T_a||_k=||T_a||_r\leq ||S||_r$ for each $a$, thus since $(K^r(E,F),||\cdot||_k)$ is a KB-space we have that $T_a$ is $||\cdot||_k$ convergent to some $T \in K_+(E,F)$. In particular $T=\lim T_a$ with respect to $||\cdot||_r$ and thus $T=\sup\{T_a\}=S$ and $K^r(E,F)$ is  a projection band in $L^r(E,F)$.

(iv) $\Rightarrow$ (v)
Suppose that $\{e_i \,\, | \,\, i \in I\}$ is a complete disjoint sequence of atoms of $E$. Let $\delta \subseteq I$ finite, then it is easy to verify that the subspace $[e_i \,\, | i \in \delta]$ is band in $E$. Let $P_\delta$ be the corresponding band projection. For each $x \in E_+$ we have that $0 \leq P_\delta(x) \leq x$ and that the net $(P_\delta(x))_\delta$ with respect to the inclusion ordering is increasing. Since $E$ has order continuous norm the interval $[0,x]$ is norm compact and thus $(P_\delta(x))_\delta$ is norm convergent. In particular we have that $x=\lim P_\delta(x)$. Indeed, $|x-\lim P_\delta(x)| \wedge e_i=\lim |(I-P_\delta)x|\wedge e_i=0$ for each $i \in I$, hence $x=\lim P_\delta(x)$. Let $T\geq 0$ and define $S_\delta(x)=TP_\delta(x)$ for each $x \in E$. Then $S_\delta$ is a positive compact operator since $P_\delta$ is a finite rank operator and $T(x)=\lim S_\delta(x)$ for each $x \in E_+$. Thus we have that $S_\delta \uparrow T$ and by (iv) we have that $T$ is compact.

(v) $\Rightarrow$ (i)
By Theorem \ref{PDSP_prop}, $E^*$ and $F$ are KB-spaces. By (\cite{ChenWick07}, Theorem 2.8), $K^r(E,F)$ has an order continuous norm. Therefore since $K^r(E,F)=L^r(E,F)$, we have that $\ell_\infty$ is not embeddable in $L^r(E,F)$.

\end{proof}

Let $E$ and $F$ be Banach lattices. The Fremlin tensor product of $E$ and $F$ is denoted $E\hat{\otimes}_{|\pi|} F$ and is defined as the completion of $E \otimes F$ under the positive projection norm $|| \cdot ||_{|\pi|}$(see \cite{FREM74}, Definition 1C, p.88). The Fremlin tensor product of $E$ and $F$ is a Banach lattice under the natural order and the positive projection norm(see \cite{FREM74}, Theorem 1E, p.89). We recall here the following standard representation of the dual of $E\hat{\otimes}_{|\pi|} F$(for a proof see also the proof of Proposition 2 in \cite{BU13b})

\begin{theorem}\label{Fremlin}(\cite{SCHA80}, Theorem 3.2, p.204)

Let $E,F$ be Banach lattices, then $(L^r(E,F^*), || \cdot ||_r)$ is lattice isometric with $(E\hat{\otimes}_{|\pi|} F)^*$
\end{theorem}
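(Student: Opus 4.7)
The plan is to construct an explicit lattice isometry $\Phi\colon (L^r(E,F^*),||\cdot||_r)\to(E\hat{\otimes}_{|\pi|}F)^*$ determined on elementary tensors by $\Phi(T)(x\otimes y)=(Tx)(y)$. This is the Banach-lattice incarnation of the standard identification of operators $E\to F^*$ with bilinear forms on $E\times F$. The three tasks I would address are: showing $\Phi$ is well-defined and contractive, showing it is surjective, and verifying that it preserves the lattice structure.

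First I would check well-definedness and the norm estimate $||\Phi(T)||\le||T||_r$. For $T\in L^r(E,F^*)_+$, the bilinear form $(x,y)\mapsto(Tx)(y)$ is positive on $E_+\times F_+$, so by the universal property of the Fremlin tensor product it extends to a positive linear functional $\Phi(T)$ on $E\hat{\otimes}_{|\pi|}F$. For any positive representation $\sum_i x_i\otimes y_i$ dominating a positive $u$, one has $\Phi(T)(u)\le\sum_i(Tx_i)(y_i)\le||T||\sum_i||x_i||\,||y_i||$; taking the infimum over such representations yields $\Phi(T)(u)\le||T||\,||u||_{|\pi|}$, and since $||T||=||T||_r$ on positive operators this gives $||\Phi(T)||\le||T||_r$. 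Extending by linearity via $T=T^+-T^-$ (valid since $F^*$ is Dedekind complete, making $L^r(E,F^*)$ a Dedekind complete Banach lattice under $||\cdot||_r$ with $||T||_r=||\,|T|\,||$) produces a bounded positive map $\Phi$ on all of $L^r(E,F^*)$.

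Next I would establish surjectivity. Given $\phi\in(E\hat{\otimes}_{|\pi|}F)^*_+$, define $T\colon E\to F^*$ by $(Tx)(y):=\phi(x\otimes y)$. To see $Tx\in F^*$ I need $||x\otimes y||_{|\pi|}\le||x||\,||y||$ for arbitrary $x\in E$, $y\in F$; this follows from the identity $|x\otimes y|=|x|\otimes|y|$ in the Fremlin tensor product (a consequence of the pairwise disjointness of the four components $x^\pm\otimes y^\pm$) combined with $||\,|a|\otimes|b|\,||_{|\pi|}=||a||\,||b||$ for positive $a,b$. Then $T$ is bounded with $||T||\le||\phi||$, the hypothesis $\phi\ge 0$ gives $T\ge 0$, and by construction $\Phi(T)=\phi$ agrees with $\phi$ on elementary tensors and hence everywhere by density. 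A general $\phi\in(E\hat{\otimes}_{|\pi|}F)^*$ decomposes as $\phi^+-\phi^-$ in the dual Banach lattice, so $\Phi$ is surjective; injectivity is immediate since $\Phi(T)=0$ forces $(Tx)(y)=0$ for all $x,y$.

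Finally I would upgrade to a lattice isometry. The inverse $\Phi^{-1}$ is positive because $\Phi(T)\ge 0$ forces $(Tx)(y)\ge 0$ for all positive $x,y$, and hence $T\ge 0$ in $L^r(E,F^*)$; a positive bijection between vector lattices whose inverse is also positive is a lattice isomorphism, and in particular $\Phi(|T|)=|\Phi(T)|$. Combining this with $||T||_r=||\,|T|\,||$ and the dual-Banach-lattice identity $||\phi||=||\,|\phi|\,||$ reduces the isometric claim to the positive case, where the reverse estimate $||\Phi(T)||\ge\sup\{(Tx)(y)\colon x\in B_{E_+},\,y\in B_{F_+}\}=||T||$ closes the gap (using that positive functionals on Banach lattices attain their norm on the positive unit ball). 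The main technical obstacle is the Fremlin-tensor identity $|x\otimes y|=|x|\otimes|y|$: it is precisely this fact that allows $T$ to be defined on non-positive $x$ while keeping the pairing isometrically controlled, and thereby unlocks both the surjectivity step and the lattice compatibility of $\Phi$.
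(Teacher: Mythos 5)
Your argument is correct and is essentially the standard proof of this duality: the paper itself gives no proof, quoting the result from Schaefer and pointing to Bu et al.\ for details, and the argument in those sources is exactly the trace-duality map $T\mapsto\bigl((x\otimes y)\mapsto (Tx)(y)\bigr)$ you construct, with the same three ingredients (the representation of $\|\cdot\|_{|\pi|}$ via positive dominating tensors, the identity $|x\otimes y|=|x|\otimes|y|$, and bipositivity of the bijection). No gaps worth flagging.
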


Using Theorem \ref{main_thm} and the above representation we can extend Theorem 6.8(ii) in \cite{BU12}, where the authors stated the following result in the case where $E$ is a reflexive atomic Banach lattice.

\begin{theorem}\label{tensor}Let $E$ be an atomic Banach lattice with an order continuous norm and $F$ any  Banach lattice. Then the following are equivalent \begin{enumerate}
\item[(i)] $\ell_1$ is not lattice embeddable in $E \hat{\otimes}_{|\pi|} F$
\item[(ii)] $K^r(E,F^*)=L^r(E,F^*)$ (and $\ell_1$ is not lattice embeddable in $F$ and $E$).
\end{enumerate}
\end{theorem}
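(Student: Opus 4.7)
The plan is to bridge condition~(i) to the setting of Theorem~\ref{main_thm} via the duality in Theorem~\ref{Fremlin}. First, Theorem~\ref{Fremlin} identifies $(L^r(E,F^*),||\cdot||_r)$ with $(E\hat{\otimes}_{|\pi|} F)^*$ as Banach lattices. I would then invoke the classical equivalence that for an arbitrary Banach lattice $X$, $\ell_1$ is lattice embeddable in $X$ if and only if $\ell_\infty$ is embeddable in $X^*$. The forward direction uses Meyer--Nieberg's theorem: a lattice copy of $\ell_1$ in $X$ forces $X^*$ to fail order continuity, which produces a disjoint order-bounded sequence in $X^*$ spanning a lattice copy of $c_0$, and since $X^*$ is a dual space this lifts to an isomorphic copy of $\ell_\infty$ via Bessaga--Pelczynski. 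The converse is immediate, since a copy of $\ell_\infty$ in $X^*$ contains $c_0$, breaking the order continuity of $X^*$, and Meyer--Nieberg then furnishes a lattice copy of $\ell_1$ in $X$.

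Applying this equivalence with $X=E\hat{\otimes}_{|\pi|} F$, condition~(i) becomes the statement that $\ell_\infty$ is not embeddable in $L^r(E,F^*)$. Since $E$ is atomic with an order continuous norm, Theorem~\ref{main_thm} applied to the pair $(E,F^*)$ (with $F^*$ playing the role of the arbitrary codomain Banach lattice) identifies this with condition (v) of that theorem, namely $K^r(E,F^*)=L^r(E,F^*)$. This establishes (i)$\Leftrightarrow$(ii). For the parenthetical consequences, assume $K^r(E,F^*)=L^r(E,F^*)$; then the supremum in $L^r(E,F^*)$ of any increasing sequence of positive compact operators lies in $K^r(E,F^*)$ and is therefore compact. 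The hypothesis of Theorem~\ref{PDSP_prop}(ii) is thus satisfied with codomain $F^*$ in place of $F$, so $E^*$ and $F^*$ are both KB-spaces, and the standard duality ``$X^*$ is a KB-space iff $X$ contains no lattice copy of $\ell_1$'' converts this into the asserted fact that $\ell_1$ embeds as a sublattice in neither $E$ nor $F$.

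The only nontrivial point beyond routine bookkeeping is the $\ell_1$/$\ell_\infty$ duality invoked in the first paragraph, which is a well-known folklore packaging of Meyer--Nieberg with the Bessaga--Pelczynski lifting in dual spaces; once that is in hand, the proof is a direct concatenation of Theorems~\ref{Fremlin}, \ref{main_thm}, and~\ref{PDSP_prop}(ii). No new ideas beyond the machinery already developed in the paper appear to be needed.
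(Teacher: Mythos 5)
Your proposal is correct and follows essentially the same route as the paper: identify $(E\hat{\otimes}_{|\pi|}F)^*$ with $(L^r(E,F^*),\|\cdot\|_r)$ via Theorem \ref{Fremlin}, translate (i) into the non-embeddability of $\ell_\infty$ in $L^r(E,F^*)$ using the standard $\ell_1$/$\ell_\infty$ duality for Banach lattices (the paper simply cites \cite{ali}, Theorem 14.21 for the equivalence you re-derive), and then apply Theorem \ref{main_thm}. The only difference is in the parenthetical claim, which you obtain via Theorem \ref{PDSP_prop}(ii) and the KB-space characterization, whereas the paper gets it more directly from Proposition \ref{emb} (since $E^*$ and $F^*$ embed positively in $L^r(E,F^*)$, no copy of $\ell_\infty$ there forces none in $E^*$ or $F^*$); both derivations are valid.
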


\begin{proof}
According to the representation Theorem \ref{Fremlin} and standard results about the embeddability of $\ell_\infty$ in Banach lattices(see \cite{ali}, Theorem 14.21) we have that $\ell_1$ is not lattice embeddable in $E \hat{\otimes}_{|\pi|} F$ if and only if $\ell_\infty$ is not embeddable in $L^r(E,F^*)$. Hence the result follows by Theorem \ref{main_thm}. Moreover, note that if $\ell_\infty$ is not embeddable in $L^r(E,F^*)$, then $\ell_\infty$ is not embeddable in $F^*$ and $E^*$  by Proposition \ref{emb}, thus $\ell_1$ is not lattice embeddable in $F$ and $E$.
\end{proof}

In the above result, the assumption of the order continuity of $E$ or the atomicity of $E$ standalone, cannot yield the equivalence of (i),(ii). To illustrate this, we will use the following result:

\begin{theorem}\label{Chen_them}(\cite{Chen06}, Theorem 3.3)
Let $E,F$ be Banach lattices. If $F$ has the positive Schur property, then the regular norm $||\cdot||_r$ on $L^r(E,F)$ is order continuous if and only if $E^*$ has an order continuous norm.
\end{theorem}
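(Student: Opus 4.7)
The plan is to prove the two directions separately: for $(\Rightarrow)$ I will use the positive isometric embedding of $E^*$ into $L^r(E,F)$ supplied by Propositions \ref{emb} and \ref{ro_norm}, and for $(\Leftarrow)$ I will pass to adjoint operators and combine order continuity of $E^*$ with the positive Schur property of $F$.

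For $(\Rightarrow)$, fix $y_0 \in F_+$ with $\|y_0\| = 1$ and let $\Phi(x^*) = x^* \otimes y_0$, which by Propositions \ref{emb} and \ref{ro_norm} is a positive $\|\cdot\|_r$-isometry $E^* \to L^r(E,F)$. If $x^*_\alpha \downarrow 0$ in $E^*$, I would verify that $\Phi(x^*_\alpha) \downarrow 0$ in $L^r(E,F)$: any $S \in L^r(E,F)$ with $S \leq \Phi(x^*_\alpha)$ for all $\alpha$ satisfies $S(x) \leq x^*_\alpha(x)\, y_0$ for every $x \in E_+$, and since point evaluations are order continuous on $E^*$, the right-hand side decreases to $0$, forcing $S(x) \leq 0$ on $E_+$ and hence $S \leq 0$. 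Order continuity of $\|\cdot\|_r$ then yields $\|x^*_\alpha\| = \|\Phi(x^*_\alpha)\|_r \to 0$, so $E^*$ has order continuous norm.

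For $(\Leftarrow)$, observe first that PSP forces $F$ to have order continuous norm (otherwise $c_0$ would lattice-embed into $F$, producing a positive weakly null sequence that is not norm null); hence $F$ is Dedekind complete and $L^r(E,F)$ is a Banach lattice under $\|\cdot\|_r$. It suffices to show that $T_n \downarrow 0$ in $L^r_+(E,F)$ implies $\|T_n\|_r \to 0$. For $x \in E_+$ the sequence $(T_n x)$ decreases in $F_+$, so by OCN of $F$ converges in norm to its infimum; defining $T(x)$ as this limit yields a positive operator dominated by every $T_n$, hence $T \leq \inf_n T_n = 0$, so $T_n x \downarrow 0$ in $F$ and $\|T_n x\| \to 0$ for each $x \in E_+$. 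Passing to adjoints, for $y^* \in F^*_+$ the sequence $(T_n^* y^*)$ is decreasing in $E^*_+$, and since $y^*$ is order continuous on $F$, $T_n^* y^*(x) = y^*(T_n x) \to 0$ for every $x \in E_+$; Dedekind completeness of $E^*$ gives $T_n^* y^* \downarrow 0$ in $E^*$, and OCN of $E^*$ yields $\|T_n^* y^*\| \to 0$, extending to all $y^* \in F^*$ via positive and negative parts. Now if $\|T_n\|_r = \|T_n\| \not\to 0$, pick $\delta > 0$ and $x_n \in B_E^+$ with $\|T_n x_n\| \geq \delta$; then $|y^*(T_n x_n)| \leq \|T_n^* y^*\| \to 0$ for each $y^* \in F^*$, so $(T_n x_n)$ is weakly null in $F_+$, and PSP of $F$ forces $\|T_n x_n\| \to 0$, a contradiction.

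The main obstacle I anticipate is the precise place where PSP must be invoked: OCN of $E^*$ alone only delivers the pointwise norm convergence $\|T_n^* y^*\| \to 0$ for each fixed $y^*$, which is strictly weaker than the uniform estimate $\|T_n^*\| = \|T_n\| \to 0$ that one needs. The positive Schur property is exactly the tool that bridges this gap, by upgrading the weak convergence of the positive test sequence $(T_n x_n)$ in $F$ to norm convergence. A secondary technical point is justifying cleanly that $T_n^* y^* \downarrow 0$ in $E^*$, which uses that duals of Banach lattices are always Dedekind complete and that every $y^* \in F^*$ is order continuous when $F$ has OCN.
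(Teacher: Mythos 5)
Your proof is correct, but note that the paper does not actually prove this statement: Theorem \ref{Chen_them} is imported verbatim from Chen's paper (\cite{Chen06}, Theorem 3.3) and used as a black box in Example \ref{examp_last}, so there is nothing in this text to compare your argument against. On its own terms your two-directional argument is sound. For $(\Rightarrow)$, the rank-one embedding $x^* \mapsto x^* \otimes y_0$ does carry $x^*_\alpha \downarrow 0$ to $\Phi(x^*_\alpha) \downarrow 0$, since the infimum of a downward directed family of positive operators is computed pointwise on $E_+$ and elements of $E$ act as order continuous functionals on $E^*$; combined with the isometry from Proposition \ref{emb} this transfers order continuity of $\|\cdot\|_r$ down to $E^*$. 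For $(\Leftarrow)$, you correctly isolate where the positive Schur property enters: order continuity of $E^*$ only gives $\|T_n^* y^*\| \to 0$ for each fixed $y^*$, and PSP upgrades the resulting weak nullity of the positive sequence $(T_n x_n)$ to norm nullity. Two small points deserve an explicit citation rather than a tacit appeal: (1) the reduction from nets to sequences ("it suffices to show $T_n \downarrow 0$ implies $\|T_n\|_r \to 0$") is legitimate only because $L^r(E,F)$ is Dedekind complete (which holds here since PSP forces $F$ to have order continuous norm, hence to be Dedekind complete), by the standard equivalence of $\sigma$-order continuity and order continuity of the norm in Dedekind $\sigma$-complete Banach lattices (\cite{ali}); and (2) the identity $(\inf_n T_n)(x) = \inf_n T_n(x)$ for $x \in E_+$ and a decreasing sequence of positive operators, which you use twice, is the Riesz--Kantorovich formula for downward directed families. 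With those references made explicit, your argument is a complete, self-contained proof of the cited theorem.
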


\begin{example}\label{examp_last}Consider the following cases:
\begin{enumerate}
\item[(i)] $E$ is a non atomic $L_p$-space for some $1 < p < \infty$ and $F^*$ is a non atomic AL-space,
\item[(ii)] $E=\ell_\infty$ and $F^*=(\ell_\infty)^*$.
\end{enumerate}
Theorem \ref{Chen_them}, implies that in both cases $\ell_\infty$ is not embeddable in $L^r(E,F^*)$, hence $\ell_1$ is not lattice embeddable in $E \hat{\otimes}_{|\pi|} F$. Moreover we have that $K^r(E,F^*) \neq L^r(E,F^*)$ in both cases. Case (i) follows by (\cite{ChenWick98}, Theorem 4.9 ) and case (ii) follows by (\cite{WNUK13}, Proposition 2.8). Note also that in case (ii) we also have that $K^r(E,F)$ is a band in $L^r(E,F)$ by (\cite{ChenWick07}, Theorem 3.7).
\end{example}

\textbf{Acknowledgements}. The results were presented at the working seminar of the functional analysis group of the University of Alberta  and I would like to thank the participants for the useful discussions.

\end{document}